\theoremstyle{plain}
\newtheorem{theorem}{Theorem}[section]
\newtheorem{prop}[theorem]{Proposition}
\newtheorem{lemma}{Lemma}[section]
\newtheorem{corol}{Corollary}[theorem]
\theoremstyle{definition}
\newtheorem{definition}{Definition}[section]
\newtheorem{remark}{\textnormal{\textbf{Remark}}}
\theoremstyle{remark}
\numberwithin{equation}{section}
\begin{document}
\title[On a class of $\alpha$-para Kenmotsu manifolds]%
{On a class of $\alpha$-para Kenmotsu manifolds}
\author[K. Srivastava \and S. K. Srivastava]
{K. Srivastava* \and S. K. Srivastava**}

\newcommand{\acr}{\newline\indent}

\address{\llap{*\,}Department of Mathematics\acr
                    D. D. U. Gorakhpur University\acr
                    Gorakhpur-273009\acr
                    Uttar Pradesh\acr
                    INDIA}
\email{ksriddu22@gmail.com}

\address{\llap{**\,}Department of Mathematics\acr
                   Central University of Himachal Pradesh\acr
                   Dharamshala-176215\acr
                   Himachal Pradesh\acr
	       INDIA}
\email{sachink.ddumath@gmail.com}

\subjclass[2010]{53D15, 53C25}
\keywords{Almost paracontact metric manifold, almost normal paracontact metric manifold, curvature, Einstein manifold}

\begin{abstract}
The purpose of this paper is to classify $\alpha$-para Kenmotsu manifolds $M^3$ such that the projection of the image of concircular curvature tensor $L$ in one-dimensional linear subspace of $T_{p}(M^{3})$ generated by $\xi_{p}$ is zero. 
\end{abstract}

\maketitle

\section{Introduction}
The geometry of concircular transformations is a generalization of inversive geometry in the sense that the change of metric is more general than that induced by a circle preserving diffeomorphism \cite{mmt}. An interesting invariant of a concircular transformation is the concircular curvature tensor.

Let $(M^{2n+1},g)$ be a $(2n+1)$-dimensional connected pseudo-Riemannian manifold. The concircular curvature tensor $L$ \cite{yk1} of $M^{2n+1}$ is defined by
\begin{align}
L(X,Y)Z=&R(X,Y)Z-\frac{\tau}{2n(2n+1)}\bigl(g(Y,Z)X-g(X,Z)Y\bigr)\label{concir}
\end{align}
where $R$ is the curvature tensor, $\tau$ is the scalar curvature and $X, Y, Z\in\chi(M^{2n+1})$, $\chi(M^{2n+1})$ being the Lie algebra of vector fields of $M^{2n+1}$.

We observe immediately from the form of the concircular curvature tensor that pseudo-manifolds with vanishing concircular curvature tensor are of constant curvature. Thus one can think of the concircular curvature tensor as a measure of the failure of a pseudo-Riemannian manifold to be of constant curvature (see also \cite{debmmt}). This paper is organized as follows: In \S 2 The basic information about almost paracontact metric manifolds, normal almost paracontact  metric manifolds and the curvature tensor of the manifolds are given. In \S 3 we have obtained the relation between second order parallel tensor and the associated metric on  $\alpha$-para Kenmotsu manifold . In \S 4 we found the necessary and sufficient condition for an $\alpha$-para Kenmotsu manifold to be $\xi$-concirularly flat. Finally, we cited of an $\alpha$-para Kenmotsu manifold in \S 5.

\section{Preliminaries}
\subsection{Almost paracontact metric manifolds}
A $C^{\infty}$ smooth manifold $M^{2n+1}$ of dimension $(2n+1)$, is said to have triplet $(\phi, \xi, \eta)-$structure, if it admits an endomorphism $\phi$, a unique vector field $\xi$  and a contact form $\eta$ satisfying: 
\begin{align}\label{eta}
\phi^2 = I -\eta\otimes\xi\,\,\, {\rm\and}\,\,\,\,  \eta\left(\xi\right)=1
\end{align}
where $I$ is the identity transformation; and the endomorphism $\phi$ induces an almost paracomplex structure on each fibre of $D=ker(\eta),$ the contact subbundle, i.e., eigen distributions  $D^{\pm 1}$ corresponding to the characteristic values $\pm 1$ of $\phi$ have equal dimension $n.$\\
From the equation (\ref{eta}), it can be easily deduce that 
\begin{eqnarray}\label{phixi}
\phi\xi = 0, && \eta o\phi = 0 \,\,\,\,{\rm and \,\,\,\, rank}(\phi) = 2n.
\end{eqnarray}
This triplet structure$-(\phi, \xi, \eta)$ is called an almost paracontact structure and the manifold $M^{2n+1}$ equipped with the $(\phi, \xi, \eta)-$structure is called an almost paracontact manifold \cite{skm}. If an almost paracontact manifold admits a pseudo-Riemannian metric \cite{sz}, $g$ satisfying: 
\begin{align}\label{gphi}
g\left(\phi X, \phi Y\right) = -g(X, Y) + \eta(X)\eta(Y)
\end{align}
where signature of $g$ is necessarily $(n+1, n)$ for any vector fields  $X$ and $Y$; then the quadruple$-(\phi, \xi, \eta, g)$ is called an almost paracontact metric structure and the manifold $M^{2n+1}$ equipped with paracontact metric structure is called an almost paracontact metric manifold. With respect to $g$, $\eta$ is metrically dual to $\xi$, that is
\begin{align}\label{gx}
g(X, \xi) = \eta (X) 
\end{align}
Also, equation (\ref{gphi}) implies that
\begin{align}\label{gphix}
g(\phi X, Y) = -g(X, \phi Y).
\end{align}
 Further, in addition to the above properties, if the structure $-(\phi, \xi, \eta, g)$ satisfies:
\begin{align}
d\eta(X, Y)= g(X, \phi Y),
\nonumber
\end{align} 
for all vector fields $X$, $Y$ on $M^{2n+1}$, then the manifold is called a paracontact metric manifold and the corresponding structure$-(\phi, \xi, \eta, g)$ is called a paracontact structure with the associated metric $g$ \cite{sz}. For an almost paracontact metric manifold, there always exists a special kind of local pseudo-orthonormal basis $\left\{X_{i}, X_{i^*}, \xi\right\}$;  where $X_{i^*}=\phi X_{i};$ $\xi$ and $X_{i}$'s are space-like vector fields and $X_{i^*}$'s  are time-like. Such a basis is called $\phi -$basis. Hence, an almost paracontact metric manifold $(M^{2n+1},\phi, \xi, \eta, g)$ is an odd dimensional manifold with a structure group $\mathbb{U}(n,\mathbb{R})\times Id$, where $\mathbb{U}(n,\mathbb{R})$ is the para-unitary group isomorphic to $\mathbb{G}\mathbb{L}(n,\mathbb{R})$. 
\subsection{Normal almost paracontact metric manifolds}  
On an almost paracontact manifold, one defines the (2, 1)-tensor field $N_\phi$ by
\begin{align}\label{n}
N_{\phi}:=[\phi, \phi]-2d\eta\otimes\xi,
\end{align}
where $[\phi, \phi]$ is the Nijenhuis torsion of $\phi$. If $N_\phi$ vanishes identically, then we say that the manifold $M^{2n+1}$ is a normal almost paracontact metric manifold (\cite{sk,sz}). The normality condition implies that the almost paracomplex structure $J$ defined on $M^{2n+1}\times\mathbb{R}$ by
\begin{align}
J\Bigg(X,\lambda\frac{d}{dt}\Bigg)=\Bigg(\phi X+\lambda\xi,\eta(X)\frac{d}{dt}\Bigg)\nonumber
\end{align}
is integrable. Here $X$ is tangent to $M^{2n+1}$, $t$ is the coordinate on $\mathbb{R}$ and $\lambda$ is $C^\infty$ function on $M^{2n+1}\times\mathbb{R}$.   
Now we recall the following proposition which characterized the normality of almost paracontact metric 3-manifolds:
\begin{prop}\cite{jwleg} For almost paracontact metric 3-manifold $M^3$,the following three conditions are mutually equivalent
\begin{itemize}
 \item[(i)] $M^3$ is normal,
\item[(ii)] there exist smooth functions $\alpha$, $\beta$ on $M^3$ such that
\begin{eqnarray}\label{nablaxphiy}
(\nabla_{X}\phi)Y = \beta(g(X,Y)\xi-\eta(Y)X)+\alpha (g(\phi X, Y)\xi-\eta(Y)\phi X),
\end{eqnarray}
\item[(iii)] there exist smooth functions $\alpha$, $\beta$ on $M^3$ such that
\begin{eqnarray}\label{nablaxxi}
\nabla_{X}\xi = \alpha (X - \eta(X)\xi) + \beta \phi X\end{eqnarray}
\end{itemize}
where $\nabla$ is the Levi-Civita connection of the pseudo-Riemannian metric $g$. 
\end{prop}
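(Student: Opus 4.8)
The plan is to establish the cycle $(i)\Rightarrow(iii)\Rightarrow(ii)\Rightarrow(i)$, using as a pivot the fact that \emph{on a $3$-manifold} normality is equivalent to $\mathcal{L}_\xi\phi=0$ together with $\mathcal{L}_\xi\eta=0$, after which the linear algebra of the $2$-dimensional contact distribution $D:=\ker\eta$ does the real work. To see that pivot: using $\phi^2=I-\eta\otimes\xi$, $\phi\xi=0$ and $2d\eta(X,Y)=X\eta(Y)-Y\eta(X)-\eta([X,Y])$ one checks that $N_\phi(X,Y)=0$ whenever $X,Y\in D$, so (since $\dim D=2$) $N_\phi$ is determined by the contractions $N_\phi(\,\cdot\,,\xi)$, and a direct computation gives $N_\phi(X,\xi)=\phi\bigl((\mathcal{L}_\xi\phi)X\bigr)+(\mathcal{L}_\xi\eta)(X)\,\xi$, whose two summands lie in the complementary subbundles $\phi(TM^3)=D$ and $\mathbb{R}\xi$; hence $N_\phi=0\iff \phi\circ(\mathcal{L}_\xi\phi)=0$ and $\mathcal{L}_\xi\eta=0$, and the $\ker\phi$–ambiguity in the first condition is removed by the identity $\eta\circ(\mathcal{L}_\xi\phi)=-(\mathcal{L}_\xi\eta)\circ\phi$. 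Besides the structural relations $\phi\xi=0$, $\eta\circ\phi=0$, $\phi^2=I-\eta\otimes\xi$, $g(\phi X,\phi Y)=-g(X,Y)+\eta(X)\eta(Y)$, I will repeatedly use three consequences of $\nabla g=0$ and torsion-freeness: $\eta(\nabla_X\xi)=0$ (so $\nabla_X\xi\in D$); $(\nabla_X\phi)\xi=-\phi(\nabla_X\xi)$; and the $g$-skewness $g((\nabla_X\phi)Y,Z)=-g(Y,(\nabla_X\phi)Z)$, obtained by differentiating $g(\phi Y,Z)=-g(Y,\phi Z)$, together with $(\nabla_X\phi)(\phi Y)+\phi\bigl((\nabla_X\phi)Y\bigr)=-\eta(Y)\nabla_X\xi-(\nabla_X\eta)(Y)\,\xi$, obtained by differentiating $\phi^2=I-\eta\otimes\xi$.

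For $(i)\Rightarrow(iii)$ I would translate the two Lie conditions into $\nabla$: from $(\mathcal{L}_\xi\eta)(X)=g(X,\nabla_\xi\xi)$ one gets $\nabla_\xi\xi=0$, and from $(\mathcal{L}_\xi\phi)X=(\nabla_\xi\phi)X-\nabla_{\phi X}\xi+\phi(\nabla_X\xi)=0$ one gets $(\nabla_\xi\phi)X=\nabla_{\phi X}\xi-\phi(\nabla_X\xi)$. Restricting to $D$, write $A:=(\nabla\xi)|_D$, a well-defined endomorphism of $D$, and $J:=\phi|_D$, which is an involution ($J^2=\mathrm{id}_D$) and $g|_D$-skew. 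The displayed relation reads $(\nabla_\xi\phi)|_D=AJ-JA$, so $g$-skewness of $(\nabla_\xi\phi)$ forces $AJ-JA$ to be $g|_D$-skew. On the two-dimensional $D$ the metric $g|_D$ has signature $(1,1)$, so the $g|_D$-skew endomorphisms form exactly $\mathbb{R}J$; a $2\times2$ computation then shows $AJ-JA\in\mathbb{R}J$ forces $A=\alpha\,\mathrm{id}_D+\beta J$ for scalar functions $\alpha,\beta$. Together with $\nabla_X\xi\in D$ and $\nabla_\xi\xi=0$ this is precisely $(iii)$; and since $\alpha=\tfrac12\operatorname{tr}_D A$ and $\beta=\tfrac12\operatorname{tr}_D(JA)$ are chart-independent, they are globally defined smooth functions on $M^3$.

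For $(iii)\Rightarrow(ii)$: substituting $(iii)$ into $(\nabla_X\phi)\xi=-\phi(\nabla_X\xi)$ reproduces the $Y=\xi$ instance of $(ii)$, and applying $\eta$ to the last identity above reproduces, for $Y\in D$, the $\mathbb{R}\xi$-component of $(\nabla_X\phi)Y$ asserted by $(ii)$. It remains to see that the $D$-valued part $\bar{T}_X$ of $Y\mapsto(\nabla_X\phi)Y$, $Y\in D$, vanishes: the $D$-component of that same identity gives $\bar{T}_X J+J\bar{T}_X=0$, while $g$-skewness of $(\nabla_X\phi)$ makes $\bar{T}_X$ a $g|_D$-skew endomorphism of $D$, hence $\bar{T}_X\in\mathbb{R}J$; but the only multiple of the involution $J$ anticommuting with $J$ is $0$. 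Thus $\bar{T}_X=0$ and $(ii)$ follows. As in the previous step, two-dimensionality of $D$ is essential here.

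For $(ii)\Rightarrow(i)$: setting $X=\xi$ in $(ii)$ gives $\nabla_\xi\phi=0$, hence $\phi(\nabla_\xi\xi)=-(\nabla_\xi\phi)\xi=0$ and so $\nabla_\xi\xi=0$ (it lies in $\ker\phi\cap D$); then $(\mathcal{L}_\xi\eta)(X)=g(X,\nabla_\xi\xi)=0$, and $(\mathcal{L}_\xi\phi)X=-\nabla_{\phi X}\xi+\phi(\nabla_X\xi)$ vanishes once one substitutes the expression for $\nabla\xi$ that $(ii)$ implies (the reverse of the algebra in the previous paragraphs). By the first paragraph, $\mathcal{L}_\xi\eta=0$ and $\mathcal{L}_\xi\phi=0$ yield $N_\phi=0$, i.e. $(i)$; this implication is formal and uses neither the dimension nor any normality hypothesis. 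The main obstacle in the whole proof is the step $(i)\Rightarrow(iii)$: the full condition $N_\phi=0$ only amounts to the two rather weak Lie conditions, and squeezing the precise form $(iii)$ out of them relies on the rigidity of the neutral plane $D$ — in dimension $>3$ the analogue of $\mathfrak{so}(1,1)=\mathbb{R}\,\phi|_D$ fails, and there normality is strictly stronger than the existence of such $\alpha,\beta$.
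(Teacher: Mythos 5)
The paper itself offers no proof of this proposition --- it is quoted from Welyczko \cite{jwleg} --- so there is no in-text argument to measure yours against; judged on its own, your proof is correct and complete. The route taken in the cited source (adapting Olszak's treatment of normal almost contact metric $3$-manifolds to the para case) is to establish first, on an \emph{arbitrary} almost paracontact metric $3$-manifold, the identity $(\nabla_X\phi)Y=g(\phi\nabla_X\xi,Y)\xi-\eta(Y)\phi\nabla_X\xi$, which makes (ii)$\iff$(iii) immediate, and then to expand $N_\phi$ directly in terms of $\nabla\phi$ and $\nabla\xi$. Your argument contains exactly the same linear-algebraic kernel --- your observation that $\bar T_X$ is a $g|_D$-skew endomorphism of $D$ anticommuting with the involution $J$, hence zero, \emph{is} a proof of that identity --- but you organize everything around the invariant statement that in dimension $3$ normality reduces to $\mathcal{L}_\xi\phi=0$ and $\mathcal{L}_\xi\eta=0$. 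That packaging is a genuine gain: it isolates precisely where $\dim D=2$ enters (namely $N_\phi|_{D\times D}\equiv 0$ and $\mathfrak{so}(1,1)=\mathbb{R}J$), and it explains transparently why the proposition fails in higher dimensions. Two small caveats. First, the claim $N_\phi|_{D\times D}\equiv 0$ depends on the convention $2\,d\eta(X,Y)=X\eta(Y)-Y\eta(X)-\eta([X,Y])$; with the convention lacking the factor $\tfrac12$ the cancellation against $2d\eta\otimes\xi$ fails, so you should state that you are using the convention of \cite{sz,jwleg} (which is indeed the one consistent with this paper). Second, the closing sentence of your (ii)$\Rightarrow$(i) step slightly overstates matters: extracting $\mathcal{L}_\xi\phi=0$ and $\mathcal{L}_\xi\eta=0$ from (ii) is formal and dimension-free, but the final passage from these to $N_\phi=0$ invokes your first paragraph and therefore does use $\dim M=3$. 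Neither point is a gap in the mathematics.
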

The functions $\alpha, \beta$ appearing in (\ref{nablaxphiy}) and (\ref{nablaxxi}) are given by    
\begin{eqnarray}
2\alpha = trace\left\{X\rightarrow\nabla_{X}\xi\right\},& 2\beta = trace\left\{X\rightarrow\phi\nabla_{X}\xi\right\}.
\end{eqnarray}

\begin{definition}
A normal almost paracontact metric 3-manifold is called
\begin{itemize}
\item [$\bullet$] paracosymplectic if $\alpha=\beta=0$ \cite{pd},
\item [$\bullet$] quasi-para Sasakian if and only if  $\alpha=0$ and $\beta\ne0$ \cite{es},
\item [$\bullet$] $\beta$-para Sasakian if and only if  $\alpha=0$ and $\beta$ is non-zero constant, in particular para Sasakian if $\beta=-1$ \cite{sz},
\item [$\bullet$] $\alpha$-para Kenmotsu if $\alpha$ is non-zero constant and $\beta=0$ \cite{jwslant}.
\end{itemize}
\end{definition}
\medskip
\subsection{Curvature properties of normal almost paracontact metric 3-manifolds}
In $(2n+1)$-dimensional connected pseudo-Riemannian manifold $(M^{2n+1},g)$, the curvature tensor  $R$ \cite{neill} and the projective curvature tensor $P$ \cite{ykm} are defined by
\begin{align}
R(X,Y)Z=&\nabla_{[X,Y]}Z-[\nabla_{X},\nabla_{Y}]Z,\label{cur}\\
P(X,Y)Z=&R(X,Y)Z-\frac{1}{2n}\bigl(g(SY,Z)X-g(SX,Z)Y\bigr).\label{pxyz}
\end{align} 
In $3$-dimensional pseudo-Riemannian manifold the curvature tensor satisfies \cite{de1990}:  
\begin{align}\label{rxyz}
\tilde{R}(X,Y, Z, W)=&g(X,W)g(SY,Z)-g(X,Z)g(SY, W)+g(Y,Z)g(SX, W)\nonumber\\
-&g(Y,W)g(SX, Z)-\frac{\tau}{2}\left\{g(Y,Z)g(X, W)-g(X,Z)g(Y, W)\right\}
\end{align}
where $\tilde{R}(X,Y,Z,W)=g(R(X,Y)Z,W),$  $\tau=trace(S)$ is the scalar curvature of the manifold and Ricci operator $S$ is defined by
\begin{equation}
g(SX,Y)=Ric(X,Y).
\end{equation}

Using (\ref{eta}), (\ref{phixi}), (\ref{gphi}), (\ref{gphix}), (\ref{nablaxxi}),(\ref{cur}) and (\ref{rxyz}) it is easy to prove the following lemma:
\begin{lemma}
Let $M^{3}(\phi, \xi, \eta, g)$ be a normal almost paracontact metric manifold, then we have
\begin{align}
& R(X,Y)\xi = \bigl\{(Y\alpha) +(\alpha^{2}+\beta^{2})\eta(Y)\bigr\}\phi^{2}X-\bigl\{(X\alpha) +(\alpha^{2}+\beta^{2})\eta(X)\bigr\}\phi^{2}Y\label{4.3}\nonumber \\
&\hspace{1.5cm}+\bigl\{(Y\beta) +2\alpha\beta\eta(Y)\bigr\}\phi X-\bigl\{(X\beta) +2\alpha\beta\eta(X)\bigr\}\phi Y\\
&Ric(X,Y)=\bigl\{\frac{\tau}{2}-(\xi\alpha)-\alpha^2-\beta^2\bigr\}g(X,Y)+\bigl\{(\xi\alpha)+3(\alpha^2+\beta^2)-\frac{\tau}{2}\bigr\}\eta(X)\eta(Y)\label{4.4}\nonumber \\
&\hspace{1.5cm}+ \{\eta(Y)(X\alpha) +\eta(X)(Y\alpha)\}-\{\eta(Y)\phi (X\beta) +\eta (X)\phi (Y\beta)\},\\
& Ric(Y,\xi)=(Y\alpha) -\phi (Y\beta)+\bigl\{(\xi\alpha) + 2(\alpha^2 +\beta^2)\bigr\}\eta(Y),\label{4.5}\\
&\xi\beta +2\alpha\beta =0.\label{4.6}
\end{align}
\end{lemma}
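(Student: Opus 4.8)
The plan is to derive the formula for $R(X,Y)\xi$ directly from the covariant-derivative description of normality in the Proposition, to deduce the formula for $Ric(Y,\xi)$ and the scalar relation $\xi\beta+2\alpha\beta=0$ from it, and to obtain the general Ricci formula last by substituting all of these into the three-dimensional curvature identity (\ref{rxyz}). Using $\phi^{2}X=X-\eta(X)\xi$ from (\ref{eta}), first rewrite (\ref{nablaxxi}) as $\nabla_{X}\xi=\alpha\,\phi^{2}X+\beta\,\phi X$. To prove the $R(X,Y)\xi$-formula, expand $R(X,Y)\xi$ from the definition (\ref{cur}), substitute this expression for $\nabla\xi$, and differentiate; the terms carrying $\nabla_{X}Y$ and $\nabla_{Y}X$ recombine with $\nabla_{[X,Y]}\xi$ and cancel, which leaves
\begin{align*}
R(X,Y)\xi={}&(Y\alpha)\phi^{2}X-(X\alpha)\phi^{2}Y+(Y\beta)\phi X-(X\beta)\phi Y\\
&+\alpha\bigl[(\nabla_{Y}\phi^{2})X-(\nabla_{X}\phi^{2})Y\bigr]+\beta\bigl[(\nabla_{Y}\phi)X-(\nabla_{X}\phi)Y\bigr].
\end{align*}
Now $(\nabla_{Y}\phi^{2})X=-g(\nabla_{Y}\xi,X)\,\xi-\eta(X)\nabla_{Y}\xi$ follows at once from $\phi^{2}=I-\eta\otimes\xi$ and metric compatibility, while $(\nabla_{Y}\phi)X$ is given by (\ref{nablaxphiy}). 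Substituting both, the $g(\phi X,Y)\,\xi$ contributions cancel since $g(\phi X,Y)=-g(X,\phi Y)$ by (\ref{gphix}) while $g(\phi^{2}X,Y)=g(X,Y)-\eta(X)\eta(Y)$ is symmetric, and a short reduction using $\phi\xi=0$, $\eta\circ\phi=0$ (from (\ref{phixi})) and $\eta(Y)X=\eta(Y)\phi^{2}X+\eta(X)\eta(Y)\xi$ compresses the $\alpha^{2}$- and $\beta^{2}$-terms into $(\alpha^{2}+\beta^{2})\bigl(\eta(Y)\phi^{2}X-\eta(X)\phi^{2}Y\bigr)$; the result is exactly the asserted formula.

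The $Ric(Y,\xi)$-formula then follows by contracting the $R(X,Y)\xi$-formula over its first argument in a local $\phi$-basis $\{X_{1},\phi X_{1},\xi\}$: writing $Ric(Y,\xi)=\sum_{i}\varepsilon_{i}\,g\bigl(R(e_{i},Y)\xi,e_{i}\bigr)$ with $\varepsilon_{i}=g(e_{i},e_{i})$ and using $\mathrm{trace}(\phi)=0$, $\mathrm{trace}(\phi^{2})=2n=2$, $\sum_{i}\varepsilon_{i}g(\phi^{2}Y,e_{i})e_{i}=\phi^{2}Y$ and $\eta(\phi^{2}Y)=\eta(\phi Y)=0$, one is left with precisely $(Y\alpha)-\phi(Y\beta)+\{(\xi\alpha)+2(\alpha^{2}+\beta^{2})\}\eta(Y)$. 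For $\xi\beta+2\alpha\beta=0$, put $Y=\xi$ in the $R(X,Y)\xi$-formula; since $\phi\xi=\phi^{2}\xi=0$ it reduces to $R(X,\xi)\xi=\{(\xi\alpha)+\alpha^{2}+\beta^{2}\}\phi^{2}X+(\xi\beta+2\alpha\beta)\phi X$. The endomorphism $X\mapsto R(X,\xi)\xi$ is $g$-self-adjoint---indeed $g(R(X,\xi)\xi,W)=g(R(W,\xi)\xi,X)$ is immediate from the pair-interchange and skew symmetries of $R$---while $\phi^{2}$ is $g$-self-adjoint and $\phi$ is $g$-skew-adjoint by (\ref{gphix}); uniqueness of the decomposition of an endomorphism into self- and skew-adjoint parts then forces the coefficient of $\phi$ to vanish, and since $\mathrm{rank}\,\phi=2n\neq0$ this gives $\xi\beta+2\alpha\beta=0$.

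It remains to obtain the $Ric(X,Y)$-formula. Write (\ref{rxyz}) in its $(1,3)$-form,
\begin{align*}
R(X,Y)Z={}&g(SY,Z)X-g(X,Z)SY+g(Y,Z)SX-g(SX,Z)Y\\
&-\tfrac{\tau}{2}\bigl(g(Y,Z)X-g(X,Z)Y\bigr),
\end{align*}
and set $Y=Z=\xi$; using $g(\xi,\xi)=1$ and $g(\,\cdot\,,\xi)=\eta$ this rearranges to $SX=R(X,\xi)\xi-Ric(\xi,\xi)X+\eta(X)S\xi+Ric(X,\xi)\xi+\tfrac{\tau}{2}\phi^{2}X$. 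Into this substitute $R(X,\xi)\xi=\{(\xi\alpha)+\alpha^{2}+\beta^{2}\}\phi^{2}X$ (now that $\xi\beta+2\alpha\beta=0$), along with $Ric(\xi,\xi)=2(\xi\alpha)+2(\alpha^{2}+\beta^{2})$ and $S\xi=\bigl(Ric(\,\cdot\,,\xi)\bigr)^{\sharp}$ read off from the $Ric(Y,\xi)$-formula; taking $g(SX,Y)$, expanding $\phi^{2}X=X-\eta(X)\xi$, and collecting the coefficients of $g(X,Y)$, of $\eta(X)\eta(Y)$ and of the mixed $\eta$-terms (the latter arising from the part of $S\xi$ transverse to $\xi$, which is built from $\mathrm{grad}\,\alpha$ and $\phi\,\mathrm{grad}\,\beta$) reproduces the claimed expression. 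The only genuinely non-formal step is the derivation of $\xi\beta+2\alpha\beta=0$: the covariant-derivative identities of the Proposition produce exactly the $R(X,Y)\xi$-formula and nothing further, so this scalar relation must be extracted from an extra structural fact---here, the self-adjointness of the Jacobi-type operator $R(\,\cdot\,,\xi)\xi$---after which the two Ricci formulas are a matter of bookkeeping, the main nuisance being to keep the many $\eta$-decorated terms and the sign convention in (\ref{cur}) straight.
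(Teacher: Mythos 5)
Your proposal is correct, and it fills in exactly the computation the paper leaves implicit (the paper offers no proof beyond citing the equations to be used): you derive $R(X,Y)\xi$ from (\ref{cur}) and (\ref{nablaxxi})--(\ref{nablaxphiy}), extract (\ref{4.5}) by contraction and (\ref{4.6}) from the self-adjointness of $X\mapsto R(X,\xi)\xi$, and then recover (\ref{4.4}) from the three-dimensional identity (\ref{rxyz}) --- I checked the signs against the paper's convention $R(X,Y)Z=\nabla_{[X,Y]}Z-[\nabla_X,\nabla_Y]Z$ and each of the four formulas comes out as stated. This is essentially the intended argument, carried out in full.
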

\begin{remark} From (\ref{4.6}), it follows that if  $\alpha, \beta=$constant, then $M^3$ is either $\beta$-para Sasakian or $\alpha$-para Kenmotsu or paracosymplectic.\end{remark}
\begin{prop}\label{cat}
Let $M^{3}(\phi, \xi, \eta, g)$ be an $\alpha$-para Kenmotsu manifold, then we have 
\begin{align}\label{3r}
R(X,Y)Z=&\bigl({\tau}/{2}-2\alpha^2\bigr)(g(Y, Z)X-g(X,Z)Y)\nonumber\\
-& ({\tau}/{2}-3\alpha^2)\{g(Y,Z)\eta(X)-g(X,Z)\eta(Y)\}\xi\nonumber\\
+&({\tau}/{2}-3\alpha^2)\{Y\eta(X)-X\eta(Y)\}\eta(Z).
\end{align}
\end{prop}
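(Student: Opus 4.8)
The plan is to specialize the general Lemma to the $\alpha$-para Kenmotsu setting and then feed the resulting Ricci operator into the $3$-dimensional curvature identity (\ref{rxyz}). Since $\alpha$ is a non-zero constant and $\beta\equiv 0$, every derivative term $X\alpha$, $Y\alpha$, $X\beta$, $Y\beta$, $\xi\alpha$, $\xi\beta$ occurring in (\ref{4.4})--(\ref{4.6}) vanishes and (\ref{4.6}) holds trivially. Hence (\ref{4.4}) collapses to
\[
Ric(X,Y)=\bigl(\tfrac{\tau}{2}-\alpha^{2}\bigr)g(X,Y)+\bigl(3\alpha^{2}-\tfrac{\tau}{2}\bigr)\eta(X)\eta(Y),
\]
equivalently $SX=\bigl(\tfrac{\tau}{2}-\alpha^{2}\bigr)X+\bigl(3\alpha^{2}-\tfrac{\tau}{2}\bigr)\eta(X)\xi$. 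As a consistency check one may note that this gives $Ric(Y,\xi)=2\alpha^{2}\eta(Y)$, which agrees with (\ref{4.5}) for $\beta=0$.

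Next I would substitute this expression for $g(SX,\cdot)$ and $g(SY,\cdot)$ into (\ref{rxyz}). Writing $a=\tfrac{\tau}{2}-\alpha^{2}$ and $b=3\alpha^{2}-\tfrac{\tau}{2}$, each of the four terms $g(X,W)g(SY,Z)$, $g(X,Z)g(SY,W)$, $g(Y,Z)g(SX,W)$, $g(Y,W)g(SX,Z)$ splits into a purely metric piece (with coefficient $a$) and an $\eta\otimes\eta$ piece (with coefficient $b$). The four metric pieces, together with the $-\tfrac{\tau}{2}$ term already present in (\ref{rxyz}), combine to
\[
\bigl(2a-\tfrac{\tau}{2}\bigr)\bigl(g(Y,Z)g(X,W)-g(X,Z)g(Y,W)\bigr)=\bigl(\tfrac{\tau}{2}-2\alpha^{2}\bigr)\bigl(g(Y,Z)g(X,W)-g(X,Z)g(Y,W)\bigr),
\]
while the four $\eta\otimes\eta$ pieces retain the common factor $b=-\bigl(\tfrac{\tau}{2}-3\alpha^{2}\bigr)$.

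Finally I would raise an index, i.e. use $\tilde R(X,Y,Z,W)=g(R(X,Y)Z,W)$ together with $\eta(W)=g(W,\xi)$ to read off $R(X,Y)Z$ from $\tilde R(X,Y,Z,W)$. The metric part gives the first line of (\ref{3r}); in the $\eta$ part, the two summands carrying $\eta(W)$ produce the $\xi$-valued line $-(\tfrac{\tau}{2}-3\alpha^{2})\{g(Y,Z)\eta(X)-g(X,Z)\eta(Y)\}\xi$, and the two summands carrying $g(X,W)$ or $g(Y,W)$ produce $(\tfrac{\tau}{2}-3\alpha^{2})\eta(Z)\{\eta(X)Y-\eta(Y)X\}$, which is exactly the last line of (\ref{3r}) in the paper's notation $Y\eta(X):=\eta(X)Y$. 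The only point demanding care is the bookkeeping of signs and of which slot plays the role of $W$ when passing from the $(0,4)$-tensor to the $(1,3)$-tensor; once (\ref{4.4}) has been simplified there is no genuine obstacle.
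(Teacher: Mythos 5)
Your proposal is correct and follows essentially the same route as the paper: the paper's one-line proof simply substitutes the Lemma's curvature data into the three-dimensional identity (\ref{rxyz}), which is precisely the computation you carry out in detail (your use of the specialized Ricci tensor from (\ref{4.4}) is the natural way to make that substitution explicit, and your sign bookkeeping and the reading of $Y\eta(X)$ as $\eta(X)Y$ are both consistent with the paper's conventions).
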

\begin{proof}
In view of (\ref{rxyz}) and (\ref{4.3}), we have (\ref{3r}).
\end{proof}

\section{Second order parallel tensor field}
\begin{definition}
A tensor $T$ of second order is said to be a second order parallel tensor if $\nabla T=0,$ where $\nabla$ denotes the operator of covariant differentiation with respect to the associated metric $g$ \cite{de}.
\end{definition}
Now we give the following result which established the relation between second order parallel tensor and the associated metric on  $\alpha$-para Kenmotsu manifold:
\begin{theorem}\label{ricciparallel}
On an $\alpha$-para Kenmotsu manifold $M^3$ a second order parallel tensor is a constant multiple of the associated tensor.
\end{theorem}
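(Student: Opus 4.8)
The plan is to follow the classical Levy-type argument adapted to the $\alpha$-para Kenmotsu setting. Let $T$ be a symmetric second order tensor on $M^3$ with $\nabla T=0$. The starting point is the standard consequence of parallelism: for all vector fields $X,Y,Z,W$,
\begin{align}
T(R(X,Y)Z,W)+T(Z,R(X,Y)W)=0,\nonumber
\end{align}
obtained by computing the second covariant derivative of $T$ in two orders and using the Ricci identity together with $\nabla T=0$. The idea is then to substitute the explicit form of the curvature tensor of an $\alpha$-para Kenmotsu manifold from Proposition~\ref{cat} into this identity and to make judicious choices of $X,Y,Z,W$ (most of them equal to $\xi$, using $\eta(\xi)=1$, $\phi\xi=0$, $\nabla_X\xi=\alpha(X-\eta(X)\xi)$ and $R(X,Y)\xi$ from (\ref{4.3}) specialized to $\beta=0$, $\alpha$ constant) so as to extract pointwise algebraic relations linking $T(X,\xi)$, $T(\xi,\xi)$ and $T(X,Y)$.

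The key steps, in order, are as follows. First I would record the specialization of (\ref{4.3}) to the $\alpha$-para Kenmotsu case, namely $R(X,Y)\xi=\alpha^2(\eta(X)Y-\eta(Y)X)$ (since $\alpha$ is constant, the $(Y\alpha)$ and $(X\alpha)$ terms vanish and $\beta=0$), and likewise note $R(\xi,Y)Z=\alpha^2(g(Y,Z)\xi-\eta(Z)Y)$ from Proposition~\ref{cat}. Second, putting $X=W=\xi$ in the parallelism identity and using these curvature formulas, one gets a relation of the form $\alpha^2\bigl(T(Y,Z)-\eta(Y)T(\xi,Z)-\eta(Z)T(Y,\xi)+\eta(Y)\eta(Z)T(\xi,\xi)\bigr)=0$ after simplification; since $\alpha\neq0$ this yields $T(Y,Z)=\eta(Y)T(\xi,Z)+\eta(Z)T(Y,\xi)-\eta(Y)\eta(Z)T(\xi,\xi)$. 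Third, I would differentiate the function $Y\mapsto T(\xi,Y)$: from $\nabla T=0$ and $\nabla_X\xi=\alpha(X-\eta(X)\xi)$ one computes $(\nabla_X T)(\xi,Y)=0$, hence $X\bigl(T(\xi,Y)\bigr)=T(\nabla_X\xi,Y)+T(\xi,\nabla_X Y)=\alpha\,T(X,Y)-\alpha\,\eta(X)T(\xi,Y)+T(\xi,\nabla_XY)$, which rearranges to an expression for the covariant derivative of the $1$-form $\iota_\xi T$. Substituting the formula for $T(X,Y)$ from the second step into this, and then taking a further covariant derivative (or symmetrizing appropriately and using $\nabla_X\eta=\alpha(g(X,\cdot)-\eta(X)\eta)$), one shows that $T(\xi,Y)=T(\xi,\xi)\,\eta(Y)$ and that the function $\lambda:=T(\xi,\xi)$ is constant, whence $T=\lambda\,g$.

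Concretely, the constancy of $\lambda$ should come out like this: from $(\nabla_X T)(\xi,\xi)=0$ we get $X(\lambda)=2\alpha\bigl(T(X,\xi)-\eta(X)\lambda\bigr)$, so once $T(X,\xi)=\lambda\eta(X)$ is established the right-hand side vanishes and $\lambda$ is constant; and $T(X,\xi)=\lambda\eta(X)$ itself follows by plugging the second-step identity into the first-step relation with only one index set to $\xi$, or by a short bootstrap using that the $1$-form $\omega(X):=T(X,\xi)-\lambda\eta(X)$ satisfies $\nabla\omega=0$ together with $\omega(\xi)=0$ and the fact that, on an $\alpha$-para Kenmotsu manifold, $\alpha\neq0$ forces a parallel $1$-form annihilating $\xi$ to vanish (differentiate $\omega(\xi)=0$: $0=(\nabla_X\omega)(\xi)+\omega(\nabla_X\xi)=\alpha\,\omega(X)-\alpha\eta(X)\omega(\xi)=\alpha\,\omega(X)$). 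I expect the main obstacle to be purely bookkeeping: keeping track of the $\eta$-terms and verifying that the symmetric part is what is really being used, since the cited Lemma gives $R(X,Y)\xi$ and the Ricci tensor but one must be careful that the identity $T(R(X,Y)Z,W)=-T(Z,R(X,Y)W)$ is applied with $T$ symmetric (which we may assume, or reduce to, since the associated tensor $g$ is symmetric and the statement is about being a multiple of $g$). Once the two curvature specializations above are in hand, every step is a short computation with $\alpha=\text{const}$, $\beta=0$.
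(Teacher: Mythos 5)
Your overall strategy is the classical Levy--Sharma argument and is essentially the one the paper uses: start from the parallelism identity $T(R(X,Y)Z,W)+T(Z,R(X,Y)W)=0$, feed in the explicit curvature of an $\alpha$-para Kenmotsu $3$-manifold with several slots set to $\xi$, then differentiate along $\nabla_X\xi=\alpha(X-\eta(X)\xi)$ to propagate the pointwise relations and finally to get constancy of $\lambda=T(\xi,\xi)$. The paper substitutes $Y=Z=W=\xi$ to get $T(X,\xi)=\eta(X)T(\xi,\xi)$, differentiates once to get $T=T(\xi,\xi)g$, and differentiates again for constancy; your choice $X=W=\xi$ extracts more information at the first step, and your closing computation $X(\lambda)=2\alpha\bigl(T(X,\xi)-\eta(X)\lambda\bigr)$ is exactly the paper's last step.

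That said, the written details contain concrete errors you should fix. First, your two curvature formulas are mutually inconsistent: with the paper's conventions, (\ref{4.3}) and (\ref{3r}) give $R(X,Y)\xi=\alpha^{2}\bigl(\eta(Y)X-\eta(X)Y\bigr)$ (not $\alpha^{2}(\eta(X)Y-\eta(Y)X)$), while your $R(\xi,Y)Z=\alpha^{2}\bigl(g(Y,Z)\xi-\eta(Z)Y\bigr)$ is correct; setting $Z=\xi$ in the latter contradicts setting $X=\xi$ in the former. Second, the relation you claim to obtain from $X=W=\xi$ is wrong: the identity $T(R(\xi,Y)Z,\xi)+T(Z,R(\xi,Y)\xi)=0$ actually yields
\begin{equation*}
T(Y,Z)=g(Y,Z)\,T(\xi,\xi)+\eta(Y)\,T(Z,\xi)-\eta(Z)\,T(Y,\xi),
\end{equation*}
and this is independent of the sign convention for $R$ since an overall sign cancels from the homogeneous identity. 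Your stated version, $T(Y,Z)=\eta(Y)T(\xi,Z)+\eta(Z)T(Y,\xi)-\eta(Y)\eta(Z)T(\xi,\xi)$, is not even compatible with the desired conclusion $T=\lambda g$. The good news is that the correct relation is stronger than you need: symmetrizing it in $Y,Z$ (using that $T$ is symmetric) gives $T(Y,Z)=T(\xi,\xi)g(Y,Z)$ at once, and putting $Z=\xi$ gives $T(Y,\xi)=\eta(Y)T(\xi,\xi)$, so your entire third step (differentiating $\iota_\xi T$) becomes unnecessary. Finally, your assertion that $\omega=\iota_\xi T-\lambda\eta$ satisfies $\nabla\omega=0$ is unjustified at that stage ($\lambda$ is not yet known to be constant and $\eta$ is not parallel); drop that route and keep the algebraic one, which works. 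With these corrections the proof closes, and it is, modulo the choice of which slots to set equal to $\xi$, the same proof as in the paper.
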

\begin{proof}
Let $h$ denotes symmetric $(0,2)$-tensor field on $M^3$ such that $\nabla h=0.$ Then it follows that
\begin{align}\label{par1}
h(R(Z,X)Y,W)+h(Y,R(Z,X)W)=0
\end{align}
for any $X, Y, Z, W\in\Gamma(TM^3).$ Substituting $Y=Z=W=\xi$ in (\ref{par1}), we obtain
\begin{align}
h(R(\xi, X)\xi, \xi)=0\nonumber
\end{align}
which gives by virtue of (\ref{3r}) that
\begin{align}\label{par2}
h(X,\xi)=\eta(X)h(\xi,\xi).
\end{align}
 Differentiating (\ref{par2}) along $Y$ and using (\ref{nablaxxi}) and (\ref{par2}), we have 
\begin{align}\label{par3}
h(X,Y)=h(\xi,\xi)g(X, Y).
\end{align}
Again differentiating (\ref{par3}) covariantly along any vector field on $M^3$ it can be easily seen that $h(\xi,\xi)$ is constant. This completes the proof.
\end{proof}
\begin{remark}
If the Ricci tensor field is parallel in an  $\alpha$-para Kenmotsu manifold $M^3$, then it is an Einstein manifold.
\end{remark} 
Let us suppose that $h$ is a parallel $2$-form on $M^3$, that is, 
\begin{align}\label{hxy}
 h(X,Y)=-h(Y,X) \,\,\,{\rm and}\,\,\, \nabla h=0.
\end{align}
Then we prove the following result:
\begin{theorem}
Let $M^{3}(\phi, \xi, \eta, g)$ be an $\alpha$-para Kenmotsu manifold. Then non-zero parallel 2-form cannot occur on $M^3$.
\end{theorem}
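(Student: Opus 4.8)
The plan is to mimic the strategy of Theorem~\ref{ricciparallel}, but now exploiting antisymmetry to force $h$ to vanish entirely. First I would observe that the Ricci identity applied to the parallel tensor $h$ gives, exactly as in (\ref{par1}),
\begin{align*}
h(R(Z,X)Y,W)+h(Y,R(Z,X)W)=0
\end{align*}
for all $X,Y,Z,W\in\Gamma(TM^3)$. The natural first substitution is $Z=W=\xi$, $Y=X$ arbitrary, or more usefully $Z=\xi$ with $Y$ arbitrary and $W=\xi$; using the explicit form of the curvature tensor (\ref{3r}) from Proposition~\ref{cat} one can evaluate $R(\xi,X)Y$ in closed form. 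Plugging this in and simplifying with $\eta(\xi)=1$ and $g(X,\xi)=\eta(X)$ should yield a relation of the shape $h(X,\xi)=\eta(X)\,h(\xi,\xi)$, and then antisymmetry (\ref{hxy}) forces $h(\xi,\xi)=0$, hence
\begin{align*}
h(X,\xi)=0\qquad\text{for all }X.
\end{align*}

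Next I would differentiate the identity $h(X,\xi)=0$ covariantly along an arbitrary vector field $Y$, using $\nabla h=0$ together with the structure equation (\ref{nablaxxi}), which for an $\alpha$-para Kenmotsu manifold ($\beta=0$) reads $\nabla_Y\xi=\alpha(Y-\eta(Y)\xi)$. This gives $0=h(\nabla_Y X,\xi)+h(X,\nabla_Y\xi)=\alpha\,h(X,Y)-\alpha\,\eta(Y)h(X,\xi)$, and since the last term vanishes and $\alpha\neq0$ we conclude $h(X,Y)=0$ for all $X,Y$. Thus no non-zero parallel $2$-form can exist.

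The main point to get right — and the only place the argument could stumble — is the very first step: extracting the relation $h(X,\xi)=\eta(X)h(\xi,\xi)$ from the Ricci-type identity. One must choose the substitution in (\ref{par1}) so that the curvature terms, when expanded via (\ref{3r}), collapse to something proportional to $\eta$ and to $h(\cdot,\xi)$ or $h(\xi,\xi)$ rather than producing a genuinely new unknown; the substitution $Y=W=\xi$ (leaving $X,Z$ free, or setting $Z=X$) is the one that works, because $R(Z,X)\xi$ is built purely from $\eta$ and $\xi$ by (\ref{3r}), so both terms of the identity become multiples of $h(\cdot,\xi)$-type quantities. Everything after that is a short computation using antisymmetry and the $\alpha$-para Kenmotsu connection formula, with $\alpha\neq0$ being exactly what lets us divide through at the end.
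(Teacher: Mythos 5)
Your second paragraph is sound and coincides with the paper's own closing step: once $h(\cdot,\xi)\equiv 0$ is known, covariant differentiation along $Y$ together with $\nabla_{Y}\xi=\alpha(Y-\eta(Y)\xi)$ and $\alpha\neq 0$ forces $h(X,Y)=0$. The gap is in your first step. For a skew-symmetric $h$, the substitution you single out as ``the one that works'', namely $Y=W=\xi$ in (\ref{par1}), produces
\begin{align*}
h(R(Z,X)\xi,\xi)+h(\xi,R(Z,X)\xi)=0,
\end{align*}
which is the tautology $0=0$: the two terms cancel by skew-symmetry no matter what $R(Z,X)\xi$ is. The derivation of $h(X,\xi)=\eta(X)h(\xi,\xi)$ in Theorem \ref{ricciparallel} used the symmetry of $h$ essentially (there the two terms add up to $2h(R(\xi,X)\xi,\xi)$); for a $2$-form that substitution yields nothing. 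The alternative you mention, $Z=W=\xi$ with $X,Y$ free, only yields $h(Y,X)=\eta(X)h(Y,\xi)-\eta(Y)h(X,\xi)$, i.e.\ the vanishing of $h$ on $\ker\eta\times\ker\eta$, and leaves $h(X,\xi)$ for $X\in\ker\eta$ completely undetermined. (Note also that $h(\xi,\xi)=0$ needs no curvature at all; it is immediate from (\ref{hxy}), which is how the paper begins.)

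The step can be repaired inside your own framework by choosing a substitution in (\ref{par1}) in which the second slot of $h$ is \emph{not} $\xi$. For $X,Y,Z\in\ker\eta$, formula (\ref{3r}) gives $R(\xi,Z)X=\alpha^{2}g(Z,X)\xi$, so (\ref{par1}) becomes
\begin{align*}
\alpha^{2}\bigl(g(Z,X)h(\xi,Y)+g(Z,Y)h(X,\xi)\bigr)=0;
\end{align*}
evaluating on a $\phi$-basis $\{e_{1},e_{2}=\phi e_{1},\xi\}$ (take $Z=X=e_{1}$, $Y=e_{2}$, then $Z=X=e_{2}$, $Y=e_{1}$) kills $h(\xi,e_{1})$ and $h(\xi,e_{2})$, hence $h(\cdot,\xi)\equiv 0$ since $\alpha\neq 0$; your second paragraph then finishes the proof. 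For comparison, the paper avoids the curvature tensor altogether: it takes $h(\xi,\xi)=0$ from skew-symmetry and applies covariant differentiation twice — although its first differentiation, of (\ref{hxy1}), meets the same cancellation $h(\nabla_{X}\xi,\xi)+h(\xi,\nabla_{X}\xi)=0$, so the curvature argument above is in fact the cleaner way to reach $h(\cdot,\xi)=0$.
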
 
\begin{proof}
For the parallel $2$-form, we have from (\ref{hxy})
\begin{align}\label{hxy1}
h(\xi,\xi)=0.
\end{align}
Differentiating (\ref{hxy1}) covariantly along $X$ and applying (\ref{nablaxxi}) and (\ref{hxy1}), we have
\begin{align}\label{hxy2}
h(X,\xi)=0.
\end{align}
Further differentiating above covariantly with respect to $Y$ and using (\ref{nablaxxi}) and (\ref{hxy2}) yields
\begin{align}
h(X,Y)=0.\nonumber
\end{align}
This completes the proof.
\end{proof}
\section{Main results}
$\xi$-conformally flat $K$-contact manifolds have been studied by Zhen $et\,\, al.$\cite{zg}. Since at each point $p\in M^{2n+1}$ the tangent space $T_{p}(M^{2n+1})$ can be decomposed into the direct sum $T_{p}(M^{2n+1})=\phi(T_{p}(M^{2n+1}))\oplus\{\xi_{p}\},$ where $\{\xi_{p}\}$ is the one-dimensional linear subspace of $T_{p}(M^{2n+1})$ generated by $\xi_{p}$, the conformal curvature tensor $C$ is a map 
\begin{align}
C:T_{p}(M^{2n+1})\times T_{p}(M^{2n+1})\times T_{p}(M^{2n+1})\rightarrow\phi(T_{p}(M^{2n+1}))\oplus\{\xi_{p}\}.\nonumber
\end{align}
An almost contact metric manifold $M^{2n+1}$ is called $\xi$-conformally flat  if the projection of the image of $C$ in $\{\xi_{p}\}$ is zero \cite{zg}.

Analogous to the definition of $\xi$-conformally flat almost contact metric manifold we define $\xi$-concirularly flat normal almost paracontact metric manifold. 

\begin{definition}
A normal almost paracontact metric manifold $M^{2n+1}$ is called $\xi$-concirularly flat  if the condition $L(X,Y)\xi=0$ holds on $M^{2n+1}$, where concircular curvature tensor $L$ is defined by (\ref{concir}).
\end{definition}
\begin{theorem}\label{main}
Let $M^{3}(\phi, \xi, \eta, g)$ be an $\alpha$-para Kenmotsu manifold. Then $M^3$ is $\xi$-concirularly flat if and only if the scalar curvature $\tau=6\alpha^2$.
\end{theorem}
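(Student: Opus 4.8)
The plan is to compute $L(X,Y)\xi$ in closed form on an $\alpha$-para Kenmotsu manifold and then read off exactly when it vanishes. First I would specialise the definition \eqref{concir} of the concircular curvature tensor to dimension three ($n=1$), so that $2n(2n+1)=6$ and, on setting $Z=\xi$ and using \eqref{gx},
\[
L(X,Y)\xi=R(X,Y)\xi-\frac{\tau}{6}\bigl(\eta(Y)X-\eta(X)Y\bigr).
\]
Thus everything hinges on an explicit formula for $R(X,Y)\xi$.

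Next I would evaluate $R(X,Y)\xi$ on $M^{3}$. A short computation starting from \eqref{4.3} with $\beta=0$ and $\alpha$ a nonzero constant, and replacing $\phi^{2}$ by $I-\eta\otimes\xi$ via \eqref{eta} (the $\xi$-components then cancel), gives
\[
R(X,Y)\xi=\alpha^{2}\bigl(\eta(Y)X-\eta(X)Y\bigr);
\]
the same identity also follows by putting $Z=\xi$ in \eqref{3r} of Proposition \ref{cat}, since there the $\xi$-valued term vanishes and the two remaining terms combine. Substituting this into the previous display yields the single relation
\[
L(X,Y)\xi=\Bigl(\alpha^{2}-\frac{\tau}{6}\Bigr)\bigl(\eta(Y)X-\eta(X)Y\bigr).
\]

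The equivalence is then immediate. If $\tau=6\alpha^{2}$ the scalar factor vanishes and $L(X,Y)\xi=0$ identically, so $M^{3}$ is $\xi$-concircularly flat. Conversely, assuming $L(X,Y)\xi=0$ for all $X,Y$, I would invoke the decomposition $T_{p}(M^{3})=\phi(T_{p}(M^{3}))\oplus\{\xi_{p}\}$ to choose a nonzero $X\in\phi(T_{p}(M^{3}))$, so that $\eta(X)=0$, and take $Y=\xi$; then $\eta(Y)X-\eta(X)Y=X\neq 0$, forcing $\alpha^{2}-\tau/6=0$, hence $\tau=6\alpha^{2}$. The whole argument is essentially a one- or two-line calculation; the only point needing a moment's care is checking that $\eta(Y)X-\eta(X)Y$ is not identically zero before concluding the converse, which is precisely why one tests the identity on the explicit pair $(X,\xi)$. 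Accordingly I do not anticipate any genuine obstacle — the real content of the theorem is the clean form of $R(X,Y)\xi$ supplied by Proposition \ref{cat}.
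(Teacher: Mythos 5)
Your proposal is correct and takes essentially the same route as the paper: put $Z=\xi$ in (\ref{concir}), use the curvature identity for an $\alpha$-para Kenmotsu manifold to reduce $R(X,Y)\xi$ to $\alpha^{2}\bigl(\eta(Y)X-\eta(X)Y\bigr)$, and obtain $L(X,Y)\xi=(\alpha^{2}-\tau/6)\bigl(\eta(Y)X-\eta(X)Y\bigr)$, from which the equivalence is immediate. Your added care in the converse direction --- testing the identity on a nonzero $X$ with $\eta(X)=0$ and $Y=\xi$ to see that the coefficient must vanish --- is a detail the paper leaves implicit.
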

\begin{proof} Putting $Z=\xi$ in (\ref{concir}) and using (\ref{4.3}) and (\ref{4.5}), we have
\begin{align}\label{confor1}
L(X,Y)\xi=(\alpha^2-\tau /6)\{\eta(Y)X-\eta(X)Y\}.
\end{align}
This implies that $L(X,Y)\xi=0$ if and only if $\tau=6\alpha^2$.\\ 
\end{proof}
As a corollary of the above theorem we have the following result:
\begin{corol}
Let $M^{3}(\phi, \xi, \eta, g)$ be a $\xi$-concirularly flat $\alpha$-para Kenmotsu manifold. Then 
\begin{itemize}
\item[(i)]\hspace{1cm} Ricci is parallel. 
\item[(ii)]\hspace{1cm} $M^3$ is Einstein.
\end{itemize}
\end{corol}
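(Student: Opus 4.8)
The goal is to prove the corollary: a $\xi$-concircularly flat $\alpha$-para Kenmotsu manifold $M^3$ has (i) parallel Ricci tensor and (ii) is Einstein. By Theorem \ref{main}, $\xi$-concircular flatness is equivalent to $\tau = 6\alpha^2$. So the plan is to substitute $\tau = 6\alpha^2$ into the curvature and Ricci formulas available from the Lemma and Proposition \ref{cat}.

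First I would use the Ricci formula (\ref{4.4}) from the Lemma, specialized to the $\alpha$-para Kenmotsu case ($\beta = 0$, $\alpha$ constant, so all derivatives $X\alpha$, $Y\alpha$, $\xi\alpha$ vanish). This collapses (\ref{4.4}) to
\begin{align}
Ric(X,Y) = \left(\frac{\tau}{2} - \alpha^2\right)g(X,Y) + \left(3\alpha^2 - \frac{\tau}{2}\right)\eta(X)\eta(Y).\nonumber
\end{align}
Now substituting $\tau = 6\alpha^2$ gives $\frac{\tau}{2} - \alpha^2 = 2\alpha^2$ and $3\alpha^2 - \frac{\tau}{2} = 0$, so $Ric(X,Y) = 2\alpha^2 g(X,Y)$. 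Since $\alpha$ is a nonzero constant, this immediately shows $M^3$ is Einstein, proving (ii).

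For (i), parallelism of the Ricci tensor, I would observe that $S = 2\alpha^2\, I$ (equivalently $Ric = 2\alpha^2 g$) with $2\alpha^2$ a constant, and since $\nabla g = 0$ for the Levi-Civita connection, we get $\nabla Ric = 2\alpha^2 \nabla g = 0$. Hence Ricci is parallel. (One should note that the order here is mildly circular-looking but fine: the Einstein condition with constant coefficient directly yields parallelism; alternatively one could invoke the Remark after Theorem \ref{ricciparallel} in the reverse direction, but the direct computation is cleanest.) The only mild subtlety — and the one point worth stating carefully — is confirming that in the $\alpha$-para Kenmotsu setting the scalar curvature $\tau$ is genuinely constant (so that $2\alpha^2$ really is a constant and not merely a function): this follows because $\tau = 6\alpha^2$ is forced on the whole manifold by $\xi$-concircular flatness together with $\alpha$ being a constant by definition of $\alpha$-para Kenmotsu. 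No genuine obstacle arises; the result is an essentially immediate consequence of Theorem \ref{main} and formula (\ref{4.4}).
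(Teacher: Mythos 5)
Your proof is correct and follows the route the paper clearly intends (the paper states the corollary without an explicit proof, as an immediate consequence of Theorem \ref{main}): substituting $\tau=6\alpha^2$ into (\ref{4.4}) with $\beta=0$ and $\alpha$ constant gives $Ric=2\alpha^2 g$, whence Einstein and, since $2\alpha^2$ is constant and $\nabla g=0$, parallel Ricci. Your remark that the constancy of $\tau$ must be checked is a worthwhile point of care, and your computation is consistent with $\tau=\mathrm{trace}(S)=6\alpha^2$.
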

\begin{remark}An $\alpha$-para Kenmotsu manifold $M^{3}(\phi, \xi, \eta, g)$ is $\xi$-projectively flat.\end{remark}
\section{Example}
We consider the 3-dimensional manifold $M^3=\mathbb{R}^2\times\mathbb{R}_{-}\subset\mathbb{R}^3$ with the standard cartesian coordinates $(x,y,z)$. Define the almost paracontact structure $(\phi,\xi,\eta)$ on $M^3$ by
\begin{eqnarray}\label{e1}
\phi e_{1}=e_{2}, &\phi e_{2}=e_{1}, &\phi e_{3}=0,\,\,\xi =e_{3},\,\, \eta =dz,
\end{eqnarray}   
where ${e_{1}}=\frac{\partial}{\partial x}$, ${e_{2}}=\frac{\partial}{\partial y}$ and ${e_{3}}=\frac{\partial}{\partial z}$.  By straightforward calculations, one verifies that
\begin{eqnarray}
[\phi,\phi ]({e_{i}},{e_{j}})-2d\eta({e_{i}},{e_{j}})=0,&1\le i<j\le 3,\nonumber
\end{eqnarray}
which implies that the structure is normal. Let $g$ be the pseudo-Riemannian metric defined by 
\begin{align}\label{metric}
[g(e_{i},e_{j})]=
\begin{bmatrix}
exp(2z) & 0             &     0\\
0         &-exp(2z)     &     0\\
0         & 0               &      1
\end{bmatrix}.
\end{align}
For the Levi-Civita connection, we obtain 
\begin{align}\label{levi}
\begin{matrix}
\nabla_{e_{1}}{e_{1}}=-exp(2z)e_{3},            &    \nabla_{e_{1}}{e_{2}}=0,                  &\nabla_{e_{1}}{e_{3}}=e_{1},\\
\nabla_{e_{2}}{e_{1}}=0,              &    \nabla_{e_{2}}{e_{2}}=exp(2z)e_{3},                &\nabla_{e_{2}}{e_{3}}=e_{2},\\
\nabla_{e_{3}}{e_{1}}=e_{1},              &    \nabla_{e_{3}}{e_{2}}=e_{2},                &\nabla_{e_{3}}{e_{3}}=0.
\end{matrix}
\end{align}

Using the above expressions and (\ref{nablaxxi}), we find $\alpha=1,\,\,\beta=0$. Hence the manifold is an $\alpha$-para Kenmotsu manifold. With the help of (\ref{levi}) and (\ref{cur}), we have 
\begin{align}\label{curvature}
\begin{matrix}
R({e_{1}},{e_{2}})e_{3}=R({e_{1}},{e_{3}})e_{1}=-exp(2z)e_{3},&R({e_{1}},{e_{2}})e_{2}=-exp(2z)e_{1},\\
R({e_{1}},{e_{2}})e_{1}=-exp(2z)e_{2},&R({e_{2}},{e_{3}})e_{2}=exp(2z)e_{3},\\
R({e_{1}},{e_{3}})e_{3}=e_{1},&R({e_{2}},{e_{3}})e_{3}=e_{2},\\
R({e_{2}},{e_{3}})e_{1}=0,& R({e_{1}},{e_{3}})e_{2}=0.\\
\end{matrix}
\end{align}
From (\ref{curvature}) it is not hard to see that the scalar curvature $\tau=6$. Therefore $M^3$ is $\xi$-concircularly flat. Thus theorem \ref{main} is verified.

\end{document}